\begin{document}

\newtheorem{observacion}{Remark}
\newtheorem{theorem}{Theorem}[section]
\newtheorem{corollary}[theorem]{Corollary}
\newtheorem{lemma}[theorem]{Lemma}
\newtheorem{proposition}[theorem]{Proposition}
\newtheorem{definition}[theorem]{Definition}
\newtheorem{example}[theorem]{Example}
\newtheorem{remark}[theorem]{Remark}
\newtheorem{notation}[theorem]{Notation}
\newtheorem{question}[theorem]{Question}
\newtheorem{claim}[theorem]{Claim}
\newtheorem{conjecture}[theorem]{Conjecture}

\title{Ramanujan-Sato-like series}

\author{Gert Almkvist}
\address{Institute of Algeraic Meditation, Fogdar\"{o}d 208 S-24333 H\"{o}\"{o}r, SWEDEN}
\email{gert.almkvist@yahoo.se}

\author{Jesús Guillera}
\address{Av.\ Ces\'areo Alierta, 31 esc.~izda 4$^\circ$--A, Zaragoza, SPAIN}
\email{jguillera@gmail.com}

\date{}

\keywords{Ramanujan-Sato-like series; Examples of complex series for $1/\pi$; Calabi-Yau differential equations; Mirror map; Yukawa coupling; Examples of non-hypergeometric series for $1/\pi^2$}
\subjclass[2010]{33C20; 14J32}

\maketitle

\begin{abstract}
Using the theory of Calabi-Yau differential equations we obtain all the parameters of Ramanujan-Sato-like series for $1/\pi^2$ as $q$-functions valid in the complex plane. Then we use these q-functions together with a conjecture to find new examples of series of non-hypergeometric type. To motivate our theory we begin with the simpler case of Ramanujan-Sato series for $1/\pi$.
\end{abstract}

\section{Introduction}

In his famous paper of 1914 S. Ramanujan published $17$ formulas for $1/\pi$ \cite{Ra}, all of hypergeometric form
\[ \sum_{n=0}^{\infty }\frac{(1/2)_{n}(s)_{n}(1-s)_{n}}{n!^3}(a+b n) \, z^{n}=\frac{1}{\pi}. \]
Here
$(c)_{n}=c(c+1)(c+2) \cdots (c+n-1)$
is the Pochhammer symbol, $s=1/2$, $1/3$, $1/4$ or $1/6$ and $z$, $b$, $a$ are algebraic numbers. The most impressive is
\begin{equation}\label{ramanujan4}
\sum_{n=0}^{\infty} \frac{\left(\frac12 \right)_n \left(\frac14 \right)_n \left(\frac34 \right)_n}{(1)_n^3} \frac{1}{99^{4n}}(26390n+1103)=\frac{9801 \sqrt{2}}{4 \pi},
\end{equation}
which gives $8$ decimal digits of $\pi$ per term. All the $17$ series were rigourously proved in 1987 by the Borwein brothers \cite{Bo}. Independently, the Borwein \cite{Bo} and the Chudnovsky brothers \cite{ChCh} studied and proved Ramanujan series of the form
\begin{equation}\label{ChBo}
\sum_{n=0}^{\infty} \frac{(6n)!}{(3n)! n!^3} (a+bn) z^n=\frac{1}{\pi}.
\end{equation}

The value of $z$ can be found in the following way: Let us take the Chudnovsky brothers series (of Ramanujan type with $s=1/6$)
\[
\sum_{n=0}^{\infty} \frac{(6n)!}{(3n)! \, n!^3} (10177+261702n) \frac{1}{(-5280^{3})^n}=\frac{880^{2}\sqrt{330}}{\pi}.
\]
The series
\[
w_0=\sum_{n=0}^{\infty }\frac{(6n)!}{(3n)! \, n!^3} \, z^n = \sum_{n=0}^{\infty} 12^{3n} \cdot \frac{\left(\frac12 \right)_n \left(\frac16 \right)_n \left(\frac56 \right)_n}{(1)_n^3} z^n
\]
satisfies the differential equation
\[ \left( \frac{}{} \! \! \theta^3  - 24z(2 \theta + 1)(6 \theta + 1)(6 \theta + 5) \right) w_0 = 0, \]
where $\theta =z\, d/dz$. A second solution is
\[ w_1=w_0 \ln z+744z + 562932z^2+570443360z^3+\cdots. \]
Define
\[ q=\exp (\frac{w_1}{w_0})=z+744z^2+750420z^3+872769632z^4+\cdots. \]
Then
\[ J(q)=\frac{1}{z(q)}=\frac{1}{q}+744+196884q+21493760q^{2}+\cdots \]
is the famous modular invariant and
\[ J \left( \! -e^{-\pi \sqrt{67}} \, \right)=-5280^{3}. \]
A similar construction, getting a different $J:=1/z$, can be made starting with any third order differential equation which is the symmetric square of a second order differential equation. This kind of series are called Ramanujan-Sato series for $1/\pi$ because T. Sato discovered the first example of this type, one involving the Apéry numbers \cite{chan2}.
\par Similarly, the first formulas for $1/\pi^2$, found by the second author, were of hypergeometric type, using a function
\[ w_0=\sum_{n=0}^{\infty }\frac{(1/2)_{n}(s_1)_{n}(1-s_1)_{n}(s_2)_{n}(1-s_2)_n}{n!^5}\, z^n, \]
where the $14$ possible pairs $(s_1, s_2)$ are given in \cite{Gu5} or \cite{AlGu} and $w_{0}$ satisfies a fifth order differential equation
\[ \left(\theta^5-z(\theta +\frac12)(\theta +s_1)(\theta +1-s_1)(\theta +s_{2})(\theta +1-s_{2})\right) w_0=0, \]
This differential equation is of a very special type. It is a Calabi-Yau equation with a fourth order pullback with solutions $y_{0},y_{1},y_{2},y_{3}$, where
\[ w_0=y_0 \, (\theta y_1) - (\theta y_0) \, y_1. \]
This was used in \cite{Gu5} and \cite{AlGu}, where one new hypergeometric formula was found. Unfortunately, fifth order Calabi-Yau differential equations are quite rare. The simplest non-hypergeometric cases are Hadamard products of second and third order equations (labeled $A \ast \alpha$, etc in \cite{AlEnStZu}). Seven formulas of this kind have been found \cite{AlGu}, like for example
\begin{equation}\label{delta}
\sum_{n=0}^{\infty} \binom{2n}{n}^2 \sum_{i=0}^n \frac{(-1)^i 3^{n-3i} (3i)!}{i!^3} \binom{n}{3i} \binom{n+i}{i} \frac{(-1)^n}{3^{6n}} (803n^2+416n+68)=\frac{486}{\pi^2},
\end{equation}
which involves the Almkvist-Zudilin numbers. Two of the others were proved by Zudilin \cite{Zu1}. In this paper we explore more complicated fifth order equations, most of them found by the first author (\# 130 was found by H.Verrill).

\par To find $q_0$ in the $1/\pi$ case, we solve the equation $\alpha(q)=\alpha_0$, where $\alpha_0$ is a rational and
\[ \alpha(q)=\frac{\ln^2|q|}{\pi^2}. \]
The real solutions are $q_0=\pm e^{-\pi \sqrt{\alpha_0}}$. As there are many examples in the literature with $q_0$ real, in this paper we will show some series corresponding to $q_0=e^{i \pi r_0} e^{-\pi \sqrt{\alpha_0}}$, where $r_0$ is a rational such that $e^{i \pi r_0}$ is complex. If we calculate $J_0=J(q_0)$, then $z_0=1/J_0$.
In the $1/\pi^2$ case we have two functions
\begin{equation}\label{alpha-tau-intro}
 \alpha(q)=\frac{\frac{1}{6} \ln^3|q|-T(q)-h \zeta (3)}{\pi^2 \ln|q|}, \quad
\tau(q)=\frac{\frac{1}{2} \ln^2|q|-(\theta_q T)(q)}{\pi^2}-\alpha(q),
\end{equation}
where $h$ is an invariant and $T(q)$ essentially is the Gromov-Witten potential in String Theory. Solving the equation $\alpha(q)=\alpha_0$ numerically, where $\alpha_0$ is rational we get an approximation of $q_0$. Replacing $q_0$ in the second equation we get $\tau_0$. We conjecture that the corresponding series is of Ramanujan-type for $1/\pi^2$ if, and only if, $\tau_0^2$ is also rational. The success in finding the examples of this paper depends heavily on our experimental method to get the invariant $h$. It uses the critical value $z=z_c$, the radius of convergence for the power series $w_0$. From the conjecture $(dz/dq)(q_c)=0$ we get an approximation of $q_c$ and using the PSLQ algorithm to find an integer relation among the numbers
\[ \frac{\ln^3 |q_c|}{6}-T(q_c), \qquad \pi^2 \ln |q_c|, \qquad \zeta(3), \]
we obtain simultaneously $\alpha_c$ and the invariant $h$. Replacing $\alpha_c$ and $q_c$ in the second equation we get $\tau_c$.
\par In the $1/\pi$ case, the algebraic but non-rational $z_0$ dominate the rational solutions (see the tables in \cite{Al6}). In the case $1/\pi^2$ the only known series with a non-rational $z_0$ is
\begin{multline}\label{f9}
\sum_{n=0}^{\infty}  \frac{\left(\frac12 \right)_n^3 \left( \frac13 \right)_n \left( \frac23 \right)_n}{(1)_n^5} \left( \frac{15 \sqrt{5}-33}{2} \right)^{3n} \times \\
\left[ \frac{}{}(1220/3-180\sqrt{5})n^2+(303-135\sqrt{5})n+(56-25\sqrt{5})\right]=\frac{1}{\pi^2},
\end{multline}
which was discovered by the second author \cite{Gu7}. See also the corresponding mosaic supercongruences in \cite{Gu6}.
\par We obtain the $q$-functions for all the parameters of general Ramanujan-Sato-like series for $1/\pi$ and $1/\pi^2$. Contrary to the series for $1/\pi$ in which everything can be proved rigourously by means of modular equations, in the case $1/\pi^2$ we can only evaluate the functions numerically and then guess the algebraic values when they exist. A modular-like theory which explains the rational and algebraic quantities observed is still not available \cite{yangzudilin}. For an excellent account of these topics, see \cite{Zu4}.

\section{Ramanujan-Sato-type series for $1/\pi$}

Certain differential equations of order $3$ are the symmetric square of a differential equation of second order. Suppose
\begin{equation}\label{ecudif-order3}
\theta^3 w = e_2(z) \theta^2 w + e_1(z) \theta w +e_0(z)w, \qquad \theta=z \frac{d}{dz},
\end{equation}
is the symmetric square of the second order equation
\begin{equation}\label{ecudif-order2}
\theta^2 y = c_1(z) \theta y +c_0(z)y, \qquad 3c_1(z)=e_2(z).
\end{equation}
We define the following function:
\[ P(z)=\exp \int \frac{-2 c_1(z)}{z} dz, \]
with $P(0)=1$, which plays an important role in the theory. In the examples of this paper $P(z)$ is a polynomial but we have also found examples for which $P(z)$ is a rational function.
\par The fundamental solutions $w_0$, $w_1$, $w_2$ of the third order differential equation are connected to the fundamental solutions $y_0$, $y_1$ of the second order equation by
\begin{equation}\label{prop-wy-simple}
w_0=y_0^2, \quad w_1=y_0 y_1, \quad w_2=\frac12 \, y_1^2,
\end{equation}
\cite[Prop. 9]{AlZu}. We define the wronskians
\[
W(w_i,w_j)=\left|
  \begin{array}{cc}
    w_i & \theta w_i \\
    w_j & \theta w_j \\
  \end{array}
\right|, \qquad
W(y_0,y_1)=\left|
  \begin{array}{cc}
    y_0 & \theta y_0 \\
    y_1 & \theta y_1 \\
  \end{array}
\right|.
\]
Observe that this notation is not the same as in \cite{Al2}, where in the definition of $W(y_0,y_1)$ we have $y_0'$ and $y_1'$ instead of $\theta y_0$ and $\theta y_1$.
\begin{theorem}
We have
\begin{equation}\label{wronsk-simple}
W(w_0,w_1)=\frac{y_0^2}{\sqrt{P}}, \quad
W(w_0,w_2)=\frac{y_0y_1}{\sqrt{P}}, \quad
W(w_1,w_2)=\frac{y_1^2}{2\sqrt{P}}.
\end{equation}
\end{theorem}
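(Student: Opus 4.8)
The plan is to reduce all three identities to a single statement about the Wronskian $W(y_0,y_1)$ of the underlying second order equation \eqref{ecudif-order2}, and then to evaluate that one Wronskian explicitly.

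First I would substitute the factorizations \eqref{prop-wy-simple} directly into the definition of each $W(w_i,w_j)$ and expand. Writing $\theta w_0 = 2y_0\,\theta y_0$, $\theta w_1 = (\theta y_0)y_1 + y_0\,\theta y_1$ and $\theta w_2 = y_1\,\theta y_1$, the four terms in each $2\times 2$ determinant collapse: the contributions proportional to $y_0\,\theta y_0$ or $y_1\,\theta y_1$ cancel and what survives is a monomial in $y_0,y_1$ multiplying the combination $y_0\,\theta y_1 - y_1\,\theta y_0 = W(y_0,y_1)$. Concretely this gives
\[
W(w_0,w_1)=y_0^2\,W(y_0,y_1),\qquad W(w_0,w_2)=y_0y_1\,W(y_0,y_1),\qquad W(w_1,w_2)=\tfrac12\,y_1^2\,W(y_0,y_1),
\]
so the three identities of the theorem are all equivalent to the single claim $W(y_0,y_1)=1/\sqrt{P}$.

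Next I would compute $W(y_0,y_1)$ by the standard Liouville/Abel argument. Differentiating $W=y_0\,\theta y_1 - y_1\,\theta y_0$ and using that each $y_i$ solves \eqref{ecudif-order2}, the products of first derivatives cancel and the $c_0$-terms cancel by antisymmetry, leaving the first order relation $\theta W = c_1(z)\,W$. Integrating, $W(y_0,y_1)=C\exp\!\int \tfrac{c_1(z)}{z}\,dz = C/\sqrt{P}$, by the very definition $P(z)=\exp\int(-2c_1/z)\,dz$.

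The one genuine point to settle is the constant $C$, and this is where I expect the only real care is needed. I would fix it from the behaviour near the regular singular point $z=0$: with the Frobenius normalization $y_0=1+O(z)$ and $y_1=y_0\ln z + O(z)$ underlying \eqref{prop-wy-simple}, one has $\theta y_0\to 0$ and $\theta y_1\to 1$ as $z\to 0$, while the a priori divergent term $y_1\,\theta y_0\sim (\ln z)\cdot O(z)\to 0$, so $W(y_0,y_1)\to 1$. Since $P(0)=1$, this forces $C=1$, and the reduction is complete. The algebra in the first two steps is routine; the only subtlety is matching the integration constant to the indicial normalization at the origin.
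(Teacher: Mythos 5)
Your proof is correct and follows essentially the same route as the paper: reduce each $W(w_i,w_j)$ to a multiple of $W(y_0,y_1)$ via \eqref{prop-wy-simple}, then use the Abel relation $\theta W(y_0,y_1)=c_1(z)\,W(y_0,y_1)$ to conclude $W(y_0,y_1)=1/\sqrt{P}$. The only difference is that you explicitly fix the integration constant from the Frobenius normalization at $z=0$ together with $P(0)=1$, a step the paper leaves implicit in ``this implies $f=1/\sqrt{P}$''.
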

\begin{proof}
Using (\ref{prop-wy-simple}), we get
\[ W(w_0,w_1)=y_0^2 \, W(y_0,y_1), \quad W(w_0,w_2)=y_0y_1 \, W(y_0,y_1) \]
and
\[ W(w_1,w_2)=\frac12 y_1^2 \, W(y_0,y_1). \]
If we denote with $f$ the wronskian $W(y_0,y_1)$, then from (\ref{ecudif-order2}) we see that $\theta f = c_1(z) f$. This implies $f=1/\sqrt{P}$.
\end{proof}

\subsection{Series for $1/\pi$}

Let $q=e^{i \pi r} e^{-\pi \tau}$. If the function
\[ w_0(z)=\sum_{n=0}^{\infty} A_n z^n, \]
satisfies a differential equation of order $3$ as above, then we will find two functions $b(q)$ and $a(q)$ with good arithmetical properties, such that
\[ \sum_{n=0}^{\infty} A_n \left(\frac{}{} \! \! a(q)+b(q)n\right)z^n(q)=\frac{1}{\pi}. \]
The interesting cases are those with $z$, $b$, $a$ algebraic. They are called Ramanujan-Sato-type series for $1/\pi$.
\par The usual $q$-parametrization is
\[ q=\exp(\frac{y_1}{y_0})=\exp(\frac{w_1}{w_0}), \]
and we can invert it to get $z$ as a series of powers of $q$. The function $z(q)$ is the mirror map and for this kind of differential equations it has been proved that it is a modular function. We also define $J(q):=1/z(q)$.
\begin{theorem}
The functions $\alpha(q)$, $b(q)$, $a(q)$ such that
\begin{equation}\label{expansion-simple}
\sum_{j=0}^{2} \left[ \frac{}{} \! \! (w_j) a + (\theta w_j) b \right] x^j=e^{i \pi r x} \left( \frac{1}{\pi} - \frac{\pi}{2} \, \alpha x^2 \right) \, \, \text{\rm truncated at} \, \, x^3
\end{equation}
are given by
\begin{equation}\label{alpha-b-a-simple}
\alpha(q)=\tau^2(q), \quad b(q)=\tau(q) \sqrt{P(z)}, \quad a(q)=\frac{1}{\pi w_0}\left( 1+\frac{\ln|q|}{w_0} \, q \frac{dw_0}{dq} \right).
\end{equation}
In addition, if $r$ and $\tau_0^2$ are rational then $z(q_0)$, $b(q_0)$, $a(q_0)$ are algebraic.
\end{theorem}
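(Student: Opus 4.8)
The plan is to establish the two halves of the statement separately: the closed forms for $\alpha$, $b$, $a$ by a formal power-series identity, and the arithmetic assertion by the modularity of the mirror map together with complex multiplication. For the formulas I would first exploit the symmetric-square structure (\ref{prop-wy-simple}). Since $w_0=y_0^2$, $w_1=y_0y_1$, $w_2=\tfrac12 y_1^2$ and $y_1/y_0=w_1/w_0=\ln q$, the generating polynomial collapses to a truncated exponential,
\[ \sum_{j=0}^{2} w_j x^j = y_0^2\Bigl(1+\tfrac{y_1}{y_0}x+\tfrac12\bigl(\tfrac{y_1}{y_0}\bigr)^2x^2\Bigr)\equiv w_0\,e^{(\ln q)x}\pmod{x^3}. \]
Applying $\theta=z\,d/dz$ with $x$ held fixed gives $\sum_j(\theta w_j)x^j\equiv\bigl[\theta w_0+w_0(\theta\ln q)\,x\bigr]e^{(\ln q)x}$, so the left side of (\ref{expansion-simple}) equals $e^{(\ln q)x}(C_0+C_1x)$ modulo $x^3$, with $C_0=aw_0+b\,\theta w_0$ and $C_1=b\,w_0\,\theta\ln q$. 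Writing $\ln q=i\pi r-\pi\tau$ and factoring out $e^{i\pi rx}$, I would match the coefficients of $x^0,x^1,x^2$ against $\tfrac1\pi-\tfrac\pi2\alpha x^2$: the first two give $C_0=1/\pi$ and $C_1=\tau$, and the $x^2$ relation then reads $-\tfrac\pi2\tau^2=-\tfrac\pi2\alpha$, forcing $\alpha=\tau^2$ with no further freedom.

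To turn $C_1=\tau$ and $C_0=1/\pi$ into the stated $b$ and $a$, I would compute $\theta\ln q=\theta(w_1/w_0)=W(w_0,w_1)/w_0^2$ and invoke (\ref{wronsk-simple}), which gives $W(w_0,w_1)=y_0^2/\sqrt P=w_0/\sqrt P$, hence $\theta\ln q=1/(w_0\sqrt P)$; then $C_1=\tau$ reads $b\,w_0\cdot 1/(w_0\sqrt P)=\tau$, i.e. $b=\tau\sqrt P$. From $C_0=1/\pi$ I get $a=\tfrac1{\pi w_0}-b\,\theta w_0/w_0$, and rewriting $\theta=(\theta\ln q)\,\theta_q=\tfrac1{w_0\sqrt P}\,\theta_q$ with $\theta_q=q\,d/dq$, then substituting $b=\tau\sqrt P$ and $\tau=-\ln|q|/\pi$, reproduces $a=\tfrac1{\pi w_0}\bigl(1+\tfrac{\ln|q|}{w_0}\,q\,dw_0/dq\bigr)$.

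For the arithmetic half I would write $q_0=e^{\pi i\sigma_0}$ with $\sigma_0=r+i\tau_0$. When $r\in\mathbb Q$ and $\tau_0^2\in\mathbb Q$, the point $\sigma_0$ satisfies $\sigma_0^2-2r\sigma_0+(r^2+\tau_0^2)=0$, a rational quadratic with negative discriminant $-4\tau_0^2$, so $\sigma_0$ is an imaginary-quadratic (complex-multiplication) point. Since $z(q)=1/J(q)$ is a modular function for this class of equations, its value $z(q_0)$ at $\sigma_0$ is algebraic by the theory of complex multiplication; as $P$ is a polynomial (or rational function), $\sqrt{P(z_0)}$ is algebraic, and $\tau_0=\sqrt{\tau_0^2}$ is algebraic, so $b(q_0)=\tau_0\sqrt{P(z_0)}$ is algebraic.

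The hard part will be the algebraicity of $a(q_0)$, since $1/(\pi w_0)$ and $\theta_q w_0/w_0$ are separately transcendental and must conspire to cancel. Here I would use that $w_0$ is, through the mirror map, a modular form of some weight $k$, so that $\theta_q w_0/w_0$ is quasimodular of weight two; completing its $E_2$-component to the non-holomorphic $E_2^*(\sigma)=E_2(\sigma)-3/(\pi\operatorname{Im}\sigma)$ adds a term proportional to $1/\operatorname{Im}\sigma$, and because $\ln|q|=-\pi\operatorname{Im}\sigma$ this correction merges with the leading $1$ in the bracket, leaving a purely modular (non-holomorphic) weight-two expression. The value of this completed object at the CM point $\sigma_0$, together with the Chowla--Selberg relation tying $\pi$ to the period $w_0(q_0)$, renders the whole combination $\tfrac1{\pi w_0}(1+\cdots)$ algebraic. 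Pinning down the exact weight and normalization so that the $\ln|q|$-term matches the Eisenstein correction precisely is where the real work lies; once that completion is identified, the rest is bookkeeping against the standard CM-algebraicity theorems.
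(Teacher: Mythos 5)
Your computational half is correct, and it reorganizes rather than replaces the paper's argument. The paper converts (\ref{expansion-simple}) into the three-equation system (\ref{sistema-simple}), forces compatibility by setting a $3\times 3$ determinant to zero (expanded along its last column via the wronskians (\ref{wronsk-simple})) to obtain $\alpha=\tau^2$, extracts $b$ by Cramer's rule on the first two equations, and then derives the identity $w_0=\frac{q}{z\sqrt{P}}\frac{dz}{dq}$ to reach $a$. Your collapse $\sum_{j=0}^{2}w_jx^j\equiv w_0e^{(\ln q)x}\pmod{x^3}$, immediate from $w_0=y_0^2$, $w_1=y_0y_1$, $w_2=\tfrac12y_1^2$ and $y_1/y_0=\ln q$, replaces the determinant by transparent coefficient matching: your relations $C_0=1/\pi$, $C_1=\tau$ and the forced $\alpha=\tau^2$ (the $x^2$ coefficient indeed gives $\tfrac{\pi\tau^2}{2}-\pi\tau^2=-\tfrac{\pi}{2}\alpha$) are exactly the paper's three equations, and your identity $\theta\ln q=W(w_0,w_1)/w_0^2=1/(w_0\sqrt{P})$ is the same use of $W(w_0,w_1)=y_0^2/\sqrt{P}$ that underlies the paper's $w_0=\frac{q}{z\sqrt{P}}\frac{dz}{dq}$. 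I checked the operator identity $\theta=(\theta\ln q)\,\theta_q$ and the substitution $\tau=-\ln|q|/\pi$; they reproduce (\ref{alpha-b-a-simple}) exactly. What your packaging buys is conceptual clarity --- it makes visible why the compatibility condition is a truncated exponential, and it is the version that scales directly to the order-five expansion (\ref{expansion}) in Section 3 --- at no cost in rigor. (Incidentally, the paper's line ``From $b=\tau\sqrt{1-z}$'' is a slip for $b=\tau\sqrt{P(z)}$; your statement is the correct general one.)

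On the arithmetic half, be aware that the paper does not prove it in full either: it notes that $b_0$ is algebraic once $z_0$ is (the mirror map being a modular function, evaluated at what is indeed a CM point --- your quadratic $\sigma_0^2-2r\sigma_0+(r^2+\tau_0^2)=0$ with discriminant $-4\tau_0^2$ makes this precise, and a rational rescaling of the modular variable does not affect imaginary-quadraticity), and for $a(q_0)$ it merely invokes ``an analogue to the argument given in \cite[Sect.~2.4]{Gu5}''. Your sketch --- $\theta_qw_0/w_0$ quasimodular of weight two, completed by the non-holomorphic Eisenstein correction which absorbs the $\ln|q|=-\pi\operatorname{Im}\sigma$ term, then CM algebraicity together with Chowla--Selberg --- is precisely the mechanism behind that citation, and your weight bookkeeping is right since $y_0$ has weight one so $w_0=y_0^2$ has weight two. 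Your admitted loose end (matching the exact normalization of the $E_2$-completion) therefore leaves you at the same level of completeness as the published proof; it is a fair roadmap, not a gap relative to the paper.
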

\begin{proof}
First, we see that $q=e^{i \pi r} e^{-\pi \tau}$ implies that
\[ \tau(q)=-\frac{\ln|q|}{\pi}. \]
We can write (\ref{expansion-simple}) in the following equivalent form:
\begin{align}
(w_0) \, a + (\theta w_0) \, b &=\frac{1}{\pi}, \nonumber \\
(w_1) \, a + (\theta w_1) \, b &=i r, \label{sistema-simple} \\
(w_2) \, a + (\theta w_2) \, b &=-\frac{\pi}{2} ( \alpha + r^2).  \nonumber
\end{align}
In what follows we will use the wronskians (\ref{wronsk-simple}). As we want this system to be compatible, we have
\begin{equation}\label{deter-eq0}
\left|
  \begin{array}{cccc}
    w_0 & \theta w_0 & \frac{1}{\pi}  \\
    w_1 & \theta w_1 & i r  \\
    w_2 & \theta w_2 & -\frac{\pi}{2} (\alpha + r^2)  \\
  \end{array}
\right|=0.
\end{equation}
Expanding along the last column, we get
\[ \frac{1}{2\pi} \left( \frac{y_1}{y_0} \right)^2 -i r \left( \frac{y_1}{y_0} \right) - \frac{\pi}{2}(\alpha+r^2)=0. \]
Hence
\[ \frac{1}{\pi} \frac{\ln^2 q}{2}-i r \ln q -\frac{\pi}{2}(\alpha+r^2)=0. \]
As $\ln q =\ln|q|+i \pi r$ we obtain the function $\alpha(q)$. To obtain $b$ we apply Cramer's method to the system formed by the two first equations of (\ref{sistema-simple}). We get
\[
b=\left( i r - \frac{1}{\pi} \frac{w_1}{w_0} \right)\sqrt{P(z)}=
\left( \frac{i \pi r-\ln q}{\pi}\right)\sqrt{P(z)}=-\frac{\ln|q|}{\pi}\sqrt{P(z)}.
\]
Then, replacing $w_1$ with $w_0 \ln q$ in the second equation of (\ref{sistema-simple}) and solving the system formed by the two first equations, we obtain the identity
\[ w_0=\frac{q}{z\sqrt{P(z)}}\frac{dz}{dq}. \]
Finally, using the two last formulas and the first equation of (\ref{sistema-simple}), we derive the formula for $a(q)$ in (\ref{alpha-b-a-simple}). From $b=\tau \sqrt{1-z}$ we see that $b$ takes algebraic values when $r$ and $\tau^2$ are rational. By an analogue to the argument given in \cite[Sect. 2.4]{Gu5} we see that the same happens to $a(q)$.
\end{proof}

\subsection{Examples of series for $1/\pi$}

There are many examples in the literature (see \cite{BaBeCh} and the references in it) but until very recently all of them were with $r=0$ (series of positive terms) or with $r=1$ (alternating series). The first example of a complex series was found and proved, with a hypergeometric transformation, by the second author and Wadim Zudilin in \cite[eq. 44]{GuiZu}. Other complex series, proved by modular equations or hypergeometric transformations, are in  \cite{ChWaZu}, like for example
\begin{equation}
\sum_{n=0}^{\infty} \frac{(4n)!}{n!^4} \left( \frac{10+2\sqrt{-3}}{28 \sqrt{3}} \right)^{4n} \left( \frac{}{} \! \! (320-55\sqrt{-3})n+(52-12\sqrt{-3}) \right) = \frac{98\sqrt{3}}{\pi}.
\end{equation}
Tito Piezas \cite{tito} found numerically and then guessed the series
\begin{equation}
\sum_{n=0}^{\infty} \frac{(2n)!(3n)!}{n!^5}\frac{3+(17-i)n}{(2(7+i)(2+i)^4)^n}=\frac{33-6i}{4}\, \frac{1}{\pi},
\end{equation}
which involves only gaussian rational numbers. It leads to taking $q=e^{\frac{2 \pi i}{3}} e^{-\frac{4\sqrt{2}}{3}\pi}$ and of course it is possible to prove it rigourously using modular equations. For our following examples we have chosen the sequence of numbers
\[ A_n=\binom{2n}{n} \sum_{k=0}^n \binom{n}{k} \binom{2k}{k} \binom{2n-2k}{n-k}, \]
which is the Hadamard product $\binom{2n}{n} * (d)$, (see \cite{AlZu}). The differential equation is
\[ \left( \frac{}{} \! \! \theta^3-8z(2\theta+1)(3\theta^2+3\theta+1)+128z^2(\theta+1)(2\theta+1)(2\theta+3) \right) w = 0. \]
The polynomial $P(z)$ is $P(z)=(1-16z)(1-32z)$ and
\[ J(q)=\frac{1}{z(q)}=\frac{1}{q}+16+52q+834q^3+4760q^5+24703q^7+\cdots. \]
For $q=i e^{-\pi\frac{\sqrt{13}}{2}}$, we find
\begin{equation}
\sum_{n=0}^{\infty} A_n \frac{(-1+6i)+(-9+33i)n}{(16+288i)^n}=\frac{52+91i}{\sqrt{13 (1+18i)^3}} \, \cdot \frac{50}{\pi}.
\end{equation}
For $q=i e^{-\pi \frac{\sqrt{37}}{2}}$, we get
\begin{equation}
\sum_{n=0}^{\infty} A_n \frac{(11842+11741i)+112665(1+i)n}{(16-14112i)^n}= \left( \frac{37}{1+882i} \right)^{\frac32} \cdot \frac{2 \cdot 5^3 \cdot 29^3}{\pi}.
\end{equation}
Taking $q=e^{i\frac{\pi}{4}} e^{-\pi\frac{\sqrt{15}}{4}}$, we find
\begin{equation}
\sum_{n=0}^{\infty} A_n \left[ \frac{}{} \! \! 4(-3\sqrt{3}+13i)+3(29\sqrt{3}-44i)n \right] \left( \frac{2}{65-15\sqrt{3} i} \right)^n = \frac{14^2 \sqrt{5}}{\pi}.
\end{equation}
We give a final example with the sequence of Domb's numbers \cite{chan2}, called $(\alpha)$ in \cite{AlZu}
\[ A_n=\sum_{k=0}^n \binom{n}{k}^2 \binom{2k}{k} \binom{2n-2k}{n-k}. \]
The differential equation is
\[ \left( \frac{}{} \! \! \theta^3-2z(2\theta+1)(5\theta^2+5\theta+2)+64z^2(\theta+1)^3 \right) w = 0. \]
We have $P(z)\!=\!(1-4z)(1-16z)$ and
\[ J(q) \!=\! q^{-1}+6+15q+32q^2+87q^3+192q^4+\cdots. \]
For $q=e^{\frac{i\pi}{3}}e^{-\frac{2 \sqrt{2}}{3}\pi}$, we find
\begin{equation}
\sum_{n=0}^{\infty} A_n \frac{(1+i)+(4+2i)n}{(16+16i)^n}=\frac{6}{\pi}.
\end{equation}
Taking the real and imaginary parts, we get
\[
\sum_{n=0}^{\infty} A_n \frac{1+4n}{32^n} (\sqrt{2})^n \cos \frac{n\pi}{4}
+ \sum_{n=0}^{\infty} A_n \frac{1+2n}{32^n} (\sqrt{2})^n \sin \frac{n\pi}{4} = \frac{6}{\pi},
\]
and
\[
\sum_{n=0}^{\infty} A_n \frac{1+2n}{32^n} (\sqrt{2})^n \cos \frac{n\pi}{4}
= \sum_{n=0}^{\infty} A_n \frac{1+4n}{32^n} (\sqrt{2})^n \sin \frac{n\pi}{4}.
\]
The first author is preparing a collection of series for $1/\pi$ in \cite{Al6}. Although we have guessed our examples from numerical approximations, the exact evaluations can be proved rigourously by using modular equations \cite{ChWaZu}.

\section{Ramanujan-Sato-like series for $1/\pi^2$}

A Calabi-Yau differential equation is a $4^{th}$ order differential equation
\begin{equation}\label{calabi}
\theta^4 y = c_3(z) \theta^3 y + c_2(z) \theta^2 y + c_1(z) \theta y +c _0(z) y, \qquad \theta=z \frac{d}{dz},
\end{equation}
where $c_i(z)$ are quotients of polynomials of $z$ with rational coefficients, which satisfies several conditions \cite{AlGu}. There are two functions associated to these equations which play a very important role, namely: The mirror map and the Yukawa coupling. The mirror map $z(q)$ is defined as the functional inverse of
\[ q=\exp(\frac{y_1}{y_0}) \]
and the Yukawa coupling as
\[ K(q)=\theta_q^2 (\frac{y_2}{y_0}), \qquad \theta_q=q \frac{d}{dq}. \]
We define $T(q)$ as the unique power series of $q$ such that $T(0)=0$ and
\[ \theta^3_q \, T(q)=1-K(q). \]
The function
\begin{equation}\label{GW}
\Phi =\frac{1}{2}(\frac{y_{1}}{y_{0}}\frac{y_{2}}{y_{0}}-\frac{y_{3}}{y_{0}})=\frac{1}{6}\ln^3 q - T(q),
\end{equation}
is well-known in String Theory and is called the Gromov-Witten potential (see \cite[p.28]{CoKa}).

\subsection{Pullback}

The solution of some differential equations of $5^{th}$ order can be recovered from the solutions of a $4^{th}$ order Calabi-Yau differential equation. We say that they admit a pullback. If (\ref{calabi}) is the ordinary pullback of the differential equation
\begin{equation}\label{order5calabi}
\theta^5 w = e_4(z) \theta^4 w + e_3(z) \theta^3 w + e_2(z) \theta^2 w + e_1(z) \theta w + e_0(z) w,
\end{equation}
then we know that $w_0$, $w_1$, $w_2$, $w_3$, $w_4$ can be recovered from the $4$ fundamental solutions $y_0$, $y_1$, $y_2$, $y_3$ of (\ref{calabi}) in the following way
\begin{align}
w_0=\left|
  \begin{array}{cc}
    y_{0}  & y_{1}  \\
    \theta y_{0} & \theta y_{1} \\
  \end{array}
\right|, \, \,
w_1 &= \left|
  \begin{array}{cc}
    y_{0}  & y_{2}  \\
    \theta y_{0} & \theta y_{2} \\
  \end{array}
\right|, \, \,
w_3 =\frac{1}{2} \left|
  \begin{array}{cc}
    y_{1}  & y_{3}  \\
    \theta y_{1} & \theta y_{3} \\
  \end{array}
\right|, \, \,
w_4= \frac{1}{2} \left|
  \begin{array}{cc}
    y_{2}  & y_{3}  \\
    \theta y_{2} & \theta y_{3} \\
  \end{array}
\right|,  \label{single} \\
w_2 &=\left|
  \begin{array}{cc}
    y_{0}  & y_{3}  \\
    \theta y_{0} & \theta y_{3} \\
  \end{array}
\right|=
\left|
  \begin{array}{cc}
    y_{1}  & y_{2}  \\
    \theta y_{1} & \theta y_{2} \\
  \end{array}
\right|. \label{double}
\end{align}
We define the following function:
\[ P(z)=\exp \int \frac{-2 c_3(z)}{z} dz, \]
with $P(0)=1$, which plays an important role in the theory. In the Yifan Yang's pullback the corresponding coefficient is $4c_3(z)$ instead of $c_3(z)$. In all the examples of this paper $P(z)$ is a polynomial.
\par We denote as $W(w_i,w_j,w_j)$ and $W(w_i,w_j)$ the following wronskians \cite{Al2}:
\begin{equation}\label{wronskians}
W(w_i,w_j,w_k)=\left|
  \begin{array}{cccc}
    w_i & \theta w_i & \theta^2 w_i \\
    w_j & \theta w_j & \theta^2 w_j \\
    w_k & \theta w_k & \theta^2 w_k
  \end{array}
\right|, \qquad
W(w_i,w_j)=\left|
  \begin{array}{cccc}
    w_i & \theta w_i \\
    w_j & \theta w_j
  \end{array}
\right|.
\end{equation}
Due to different definition and notation $f$ in \cite{Al2} is $1/\sqrt[4]{P(z)}$ here and the powers of $x$ ($z$ here) do not appear now. We will need the following wronskians of order $3$, (see \cite{Al2}):
\[
W(w_1,w_2,w_3)=\frac12 \frac{y_1 y_2-y_0y_3}{\sqrt{P}}, \quad  W(w_0,w_2,w_3)=\frac{y_1^2}{\sqrt{P}}, \quad W(w_0,w_1,w_3)=\frac{y_0y_1}{\sqrt{P}},
\]
\[
W(w_0,w_1,w_2)=\frac{y_0^2}{\sqrt{P}}, \quad W(w_0,w_1,w_4)=\frac{y_0y_2}{\sqrt{P}}, \quad W(w_1,w_2,w_4)=\frac{y_2^2}{2\sqrt{P}},
\]
and
\[ W(w_0,w_2,w_4)=\frac{y_0y_3+y_1y_2}{2\sqrt{P}}. \]
We will also need the following wronskians of order $2$, (see \cite{Al2}):
\[
W(w_0,w_1)=\frac{y_0^2}{\sqrt[4]{P}}, \quad W(w_0,w_2)=\frac{y_0y_1}{\sqrt[4]{P}}, \quad W(w_1,w_2)=\frac{y_0y_2}{\sqrt[4]{P}}.
\]

\subsection{Series for $1/\pi^2$}
Suppose that the function
\begin{equation}\label{mainseries}
w_0(z)=\sum_{n=0}^{\infty} A_n z^n,
\end{equation}
is a solution of a $5^{th}$ order differential equation which has a pullback to a Calabi-Yau differential equation. We will determine functions $a(q)$, $b(q)$, $c(q)$ in terms of $\ln|q|$, $z(q)$ and $T(q)$, such that
\[ \sum_{n=0}^{\infty} A_n z(q)^n (a(q)+b(q)n+c(q)n^2)=\frac{1}{\pi^2}. \]
The interesting cases are those for which $z$, $c$, $b$, $a$ are algebraic numbers. We will call them Ramanujan-Sato-like series for $1/\pi^2$.
In this paper we improve and generalize to the complex plane the theory developed in \cite{AlGu} and \cite{Gu7}. We let $q=|q| \, e^{i\pi r}$, and consider an expansion of the form:
\begin{multline}\label{expansion}
\sum_{j=0}^{4} \left[ \frac{}{} \! \! (w_j) a + (\theta w_j) b + (\theta^2 w_j) c \right] x^j
\\= e^{i \pi r x} \left( \frac{1}{\pi^2}-\alpha x^2 + h \frac{\zeta(3)}{\pi^2}x^3+\frac{\pi^2}{2}(\tau^2-\alpha^2)x^4 \right) \, \, \text{truncated at} \, \, x^5.
\end{multline}
The number $h$ is a rational constant associated to the differential operator $D$ such that $Dw_0=0$. The motivation of this expansion is due to the fact that in the case of Ramanujan-Sato-like series for $1/\pi^2$ ($z$, $c$, $b$, $a$ algebraic), we have experimentally observed that $r$, $\alpha$ and $\tau^2$ are rational while $h$ is a rational constant (see the remark at the end of this section). We have the equivalent system
\begin{align}
(w_0) \, a + (\theta w_0) \, b + (\theta^2 w_0) \, c &= \frac{1}{\pi^2}, \nonumber \\
(w_1) \, a + (\theta w_1) \, b + (\theta^2 w_1) \, c &= \frac{i}{\pi} r, \nonumber \\
(w_2) \, a + (\theta w_2) \, b + (\theta^2 w_2) \, c &= -\frac{r^2}{2}-\alpha, \label{sistema} \\
(w_3) \, a + (\theta w_3) \, b + (\theta^2 w_3) \, c &= i\pi r \left( -\frac{r^2}{6}-\alpha \right) + h \frac{\zeta(3)}{\pi^2}, \nonumber \\
(w_4) \, a + (\theta w_4) \, b + (\theta^2 w_4) \, c &= \pi^2 \left(\frac{r^4}{24}+\frac{\tau^2-\alpha^2}{2}+\frac{r^2}{2} \alpha \right)+\frac{i}{\pi} h \zeta(3) r. \nonumber
\end{align}
This system allows us to develop the theory. In the next theorem we obtain $\alpha$ and $\tau$ as non-holomorphic functions of $q$.
\begin{theorem}\label{th-alfa-tau}
We have
\begin{equation}
\label{for-alpha} \alpha(q)=\frac{\frac{1}{6} \ln^3|q|-T(q)-h \zeta (3)}{\pi^2 \ln|q|},
\end{equation}
and
\begin{equation}
\label{for-tau} \tau(q)=\frac{\frac{1}{2} \ln^2|q|-(\theta_q T)(q)}{\pi^2}-\alpha(q).
\end{equation}
\end{theorem}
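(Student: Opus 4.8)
The plan is to turn the generating-function identity (\ref{expansion}) into the linear system (\ref{sistema}) by equating coefficients of $x^0,\dots,x^4$, and then to extract $\alpha$ and $\tau$ from the two compatibility conditions that this overdetermined system must obey. The five coefficient rows $(w_j,\theta w_j,\theta^2 w_j)$ lie in the three-dimensional space spanned by the $j=0,1,2$ rows, so a solution $(a,b,c)$ exists precisely when the two bordered determinants formed by adjoining the right-hand side column to the rows $\{0,1,2,3\}$ and to the rows $\{0,1,2,4\}$ both vanish. I expect the first determinant to yield $\alpha$ and the second to yield $\tau$, matching the order in which these parameters first appear (the $x^2$ and the $x^4$ equations).

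First I would expand the determinant on the rows $\{0,1,2,3\}$ along the right-hand column. Its four $3\times 3$ minors are exactly the order-$3$ Wronskians listed just before the theorem; each carries the factor $1/\sqrt P$, and dividing by $W(w_0,w_1,w_2)=y_0^2/\sqrt P$ both clears this factor and turns the remaining minors into the period ratios $y_1/y_0$, $(y_1/y_0)^2$ and $\tfrac12(y_1y_2-y_0y_3)/y_0^2$. With $y_1/y_0=\ln q$ and the Gromov-Witten identity (\ref{GW}), which says $\tfrac12(y_1y_2-y_0y_3)/y_0^2=\Phi=\tfrac16\ln^3 q-T(q)$, the vanishing determinant becomes one scalar equation linking $\ln q$, $T(q)$, $\alpha$ and the cubic data $h\zeta(3)$. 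Writing $\ln q=\ln|q|+i\pi r$ and collecting, the coefficient of $\alpha$ reduces to $\ln|q|$ while the inhomogeneous terms reproduce $\tfrac16\ln^3|q|-T(q)-h\zeta(3)$, which is (\ref{for-alpha}).

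Next I would run the same expansion on the rows $\{0,1,2,4\}$. The relevant minors are now $W(w_1,w_2,w_4)$, $W(w_0,w_2,w_4)$, $W(w_0,w_1,w_4)$ and $W(w_0,w_1,w_2)$, so after the identical normalization a new ratio $s:=y_2/y_0$ enters, along with $(y_0y_3+y_1y_2)/y_0^2$, which by (\ref{GW}) equals $2\bigl((\ln q)\,s-\Phi\bigr)$. The one extra ingredient is the identity $s=\tfrac12\ln^2 q-(\theta_q T)(q)$: since $K=\theta_q^2 s$ and $\theta_q^3 T=1-K$, one has $\theta_q^2\bigl(s-\tfrac12\ln^2 q+\theta_q T\bigr)=0$, and the maximal-unipotent normalization $s\sim\tfrac12\ln^2 q$, $T=O(q)$ at $q=0$ forces both constants of integration to vanish. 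Substituting $s$ and the already-known $\alpha$ into the second compatibility relation, I expect the inhomogeneous side to assemble into a perfect square, giving $\pi^2\tau^2=\bigl(s/\pi-\pi\alpha\bigr)^2$; taking the square root and again putting $\ln q=\ln|q|+i\pi r$ produces $\tau=\bigl(\tfrac12\ln^2|q|-\theta_q T\bigr)/\pi^2-\alpha$, which is (\ref{for-tau}).

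The hard part will be the bookkeeping of the non-holomorphic substitution $\ln q=\ln|q|+i\pi r$: the cubic and quadratic powers of $\ln q$ and the complex right-hand sides $b_j$ have to be regrouped so that every $r$-dependent and imaginary contribution merges with $\Phi$ and $s$ to leave the stated real functions of $\ln|q|$ and $T(q)$. Confirming that the quantity attached to $\tau$ is a genuine square, rather than a general quadratic in $\alpha$ and $s$, is the crucial structural point, since it is what singles out $\tau^2$ (not $\tau$) as the candidate rational invariant. The derivation of $s=\tfrac12\ln^2 q-\theta_q T$ is the only place that uses the defining property $\theta_q^3 T=1-K$ of $T$, and pinning down its two integration constants is what guarantees the exact $\theta_q T$ term in (\ref{for-tau}).
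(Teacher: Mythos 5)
Your proposal is correct and follows essentially the same route as the paper's own proof: the paper likewise obtains (\ref{for-alpha}) from the vanishing bordered determinant on rows $\{0,1,2,3\}$ expanded along the right-hand column and reduced by the order-$3$ Wronskians, and it gets (\ref{for-tau}) from ``the first, second, third and fifth equations'' (your rows $\{0,1,2,4\}$), with your identity $y_2/y_0=\tfrac12\ln^2q-(\theta_qT)(q)$ and the perfect-square collapse $\tau^2=\left(\frac{\frac12\ln^2|q|-\theta_qT}{\pi^2}-\alpha\right)^2$ merely making explicit the computation the paper compresses into one line. One caveat you inherit from the paper's Wronskian list: the stated value $W(w_0,w_2,w_3)=y_1^2/\sqrt{P}$ should carry a factor $\tfrac12$ (compare leading orders $w_j\sim\ln^j z/j!$, which give $W(w_0,w_2,w_3)\sim\tfrac12\ln^2z$), and only with $W(w_0,w_2,w_3)=y_1^2/(2\sqrt{P})$ does the cubic in $\ln q=\ln|q|+i\pi r$ in your first compatibility relation collapse to $-\tfrac16\ln^3|q|$ and yield exactly (\ref{for-alpha}); with the uncorrected factor a residual term $\tfrac{i\pi r}{2}\ln^2q$ survives for $r\neq0$.
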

\begin{proof}
In the proof we use the wronskians above. As we want the system (\ref{sistema}) to be compatible, we have
\begin{equation}\label{deter-eq0}
\left|
  \begin{array}{cccc}
    w_0 & \theta w_0 & \theta^2 w_0 & p_0  \\
    w_1 & \theta w_1 & \theta^2 w_1 & p_1  \\
    w_2 & \theta w_2 & \theta^2 w_2 & p_2  \\
    w_3 & \theta w_3 & \theta^2 w_3 & p_3
  \end{array}
\right|=0,
\end{equation}
where $p_0$, $p_1$, etc. stand for the independent terms. Expanding the determinant along the last column, we obtain
\[ -p_0 \left(\frac{y_1 y_2-y_0y_3}{2}\right)+p_1 (y_1^2)-p_2 (y_0 y_1) + p_3 (y_0^2)=0. \]
Then, dividing by $y_0^2$, we get
\[
-p_0 \frac{1}{2} \left(\frac{y_1}{y_0} \frac{y_2}{y_0}-\frac{y_3}{y_0}\right)+p_1 \left(\frac{y_1}{y_0}\right)^2-p_2 \left(\frac{y_1}{y_0}\right) + p_3=0.
\]
Hence
\[
-p_0 \left( \frac16 \ln^3 q - T(q)  \right)+p_1 \ln^2 q - p_2 \ln q + p_3 = 0.
\]
Using $\ln q = \ln|q|+i \pi r$ and replacing $p_0$, $p_1$, $p_2$ and $p_3$ with there values in the system, we arrive at (\ref{for-alpha}). Then, from the first, second, third and fifth equations and using the the function $\alpha(q)$ obtained already, we derive (\ref{for-tau}).
\end{proof}
In the next theorem we obtain $c$, $b$, $a$ as non-holomorphic functions of $q$.
\begin{theorem}\label{teor-abc}
\begin{align}
\label{for-c}  c(q)   &=\tau(q) \sqrt[4]{P(z)}, \\
\label{for-b}  b(q)   &= \frac{z(q)}{\theta_q z(q)} \left( \frac{1}{\pi^2} \left(\frac{}{} \! \! \theta_q^2 T(q)-\ln|q|\right) - \tau(q) \, \frac{\theta_q L(q)}{L(q)} \right) \sqrt[4]{P(z)}, \\
\label{for-a}  a(q)   &=\frac{1}{w_0(q)}\left( \frac{1}{\pi^2} - (\theta w_0) b(q)- (\theta^2 w_0) c(q) \right),
\end{align}
with
\begin{equation}\label{L}
L(q)=\frac{y_0^2}{\sqrt[4]{P(z)}}=\frac{w_0(q)}{\sqrt[4]{P(z)}} \, \frac{\theta_q z(q)}{z(q)}= \frac{1}{\sqrt{P(q )}K(q)} \, \left(\frac{\theta_q z(q)}{z(q)}\right)^3,
\end{equation}
where $y_0$ is the ordinary pullback.
\end{theorem}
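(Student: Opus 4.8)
The plan is to establish the auxiliary identity (\ref{L}) first, and then read off $c$, $b$, $a$ in that order by solving the appropriate subsystems of (\ref{sistema}) by Cramer's rule, in exact parallel with the $1/\pi$ argument but now using the order-$2$ and order-$3$ wronskians listed above. I would begin with (\ref{L}). Its first equality is just the definition of $L$ together with the recorded wronskian $W(w_0,w_1)=y_0^2/\sqrt[4]{P}$. For the middle equality, since $w_0=W(y_0,y_1)=y_0^2\,\theta(y_1/y_0)=y_0^2\,\theta\ln q$ and the chain rule between the two Euler operators reads $\theta=\frac{z}{\theta_q z}\,\theta_q$, one gets $\theta\ln q=z/\theta_q z$, hence $y_0^2=w_0\,\theta_q z/z$ and the middle expression follows. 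The third equality is the only genuinely external input: it is the standard formula for the Yukawa coupling of a Calabi--Yau operator, $K(q)=\frac{1}{\sqrt[4]{P}\,y_0^2}\left(\frac{\theta_q z}{z}\right)^3$, taken from the theory in \cite{Al2}; inserting it into the middle expression gives the last one. I expect this Yukawa-coupling identity to be the main obstacle, since it is the point at which the special structure of CY equations enters; everything afterwards is linear algebra and bookkeeping.

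For (\ref{for-c}) I would apply Cramer's rule to the three equations $j=0,1,2$ of (\ref{sistema}), solving for the coefficient $c$ of $\theta^2 w$. The denominator is $W(w_0,w_1,w_2)=y_0^2/\sqrt{P}$. Expanding the numerator along its right-hand-side column gives $p_0\,W(w_1,w_2)-p_1\,W(w_0,w_2)+p_2\,W(w_0,w_1)$, and substituting the order-$2$ wronskians turns this into $L\left(p_0\frac{y_2}{y_0}-p_1\frac{y_1}{y_0}+p_2\right)$. Now I would insert $y_1/y_0=\ln q$ and $y_2/y_0=\frac12\ln^2 q-\theta_q T$ (this is the content of the definition of $T$, and is the relation already used to prove Theorem~\ref{th-alfa-tau}), then $\ln q=\ln|q|+i\pi r$ and the explicit values of $p_0,p_1,p_2$. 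The terms linear in $r$ and the pure $r^2$ terms cancel, and what survives is exactly $\tau(q)$ by (\ref{for-tau}); thus the numerator equals $\tau L=\tau\,y_0^2/\sqrt[4]{P}$, and $c=\tau L/(y_0^2/\sqrt{P})=\tau\sqrt[4]{P}$.

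With $c$ in hand, I would treat the first two equations of (\ref{sistema}) as a $2\times2$ system for $a,b$ with right-hand sides $p_0-\theta^2 w_0\,c$ and $p_1-\theta^2 w_1\,c$. Cramer's rule then yields $b=\big[(w_0p_1-w_1p_0)-c\,(w_0\theta^2w_1-w_1\theta^2w_0)\big]/W(w_0,w_1)$. At this point I would use the wronskian-derivative identity $w_0\theta^2w_1-w_1\theta^2w_0=\theta\,W(w_0,w_1)=\theta L$ and the relation $w_1/w_0=\theta_q(y_2/y_0)=\ln q-\theta_q^2 T$, obtained as in the proof of (\ref{L}) from $w_1=W(y_0,y_2)=y_0^2\,\theta(y_2/y_0)$. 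Substituting the $p_i$, converting $\theta$ to $\theta_q$ via $\theta=\frac{z}{\theta_q z}\theta_q$, and using $w_0/L=\frac{z}{\theta_q z}\sqrt[4]{P}$ from (\ref{L}), the $ir/\pi$ contributions cancel against the imaginary part of $\ln q$ and, after factoring $\frac{z}{\theta_q z}\sqrt[4]{P}$, the expression collapses to (\ref{for-b}).

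Finally (\ref{for-a}) is immediate: the first equation of (\ref{sistema}) reads $w_0\,a+\theta w_0\,b+\theta^2 w_0\,c=1/\pi^2$, which I solve for $a$ once $b$ and $c$ are known. The only loose end to record is the consistency of these choices with the two equations $j=3,4$ not used in the $c,b,a$ computation; but this is precisely the compatibility already guaranteed by Theorem~\ref{th-alfa-tau}, where the vanishing determinant (\ref{deter-eq0}) and the fifth equation were used to fix $\alpha$ and $\tau$, so no further verification is needed.
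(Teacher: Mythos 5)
Your proposal is correct and follows essentially the same route as the paper's proof: Cramer's rule on the first three (resp.\ first two) equations of (\ref{sistema}) with the tabulated order-$3$ and order-$2$ wronskians, the identities $y_1/y_0=\ln q$, $y_2/y_0=\tfrac12\ln^2 q-\theta_q T$, $w_1/w_0=\ln q-\theta_q^2 T$, $L=W(w_0,w_1)=y_0^2/\sqrt[4]{P}$, and the Yukawa-coupling formula (\ref{y0-cua}) imported from \cite{Al2}. The only cosmetic deviation is that you obtain the middle equality of (\ref{L}) directly from $w_0=y_0^2\,\theta\ln q$, whereas the paper gets it by applying $\theta_q$ to $w_1/w_0=\theta_q(y_2/y_0)$ and combining (\ref{a0b1-a1b0-L})--(\ref{y0-cua}); the content is identical.
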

\begin{proof}
Solving for $c$ by Cramer's rule from the three first equations of (\ref{sistema}), we get
\[ \frac{c}{\sqrt[4]{P(z)}} = \frac{1}{\pi^2} \left( \frac{y_2}{y_0} \right) -\frac{i}{\pi}r \left( \frac{y_1}{y_0} \right) - \frac{r^2}{2} - \alpha. \]
Hence
\[ \frac{c}{\sqrt[4]{P(z)}} = \frac{1}{\pi^2} \left( \frac12 \ln^2 q - \theta_q T \right)-\frac{i}{\pi}r \ln q-\frac{r^2}{2}-\alpha. \]
Replacing $\ln q$ with $\ln|q|+i \pi r$, we obtain (\ref{for-c}).
Then, solving for $b$  from the two first equations of (\ref{sistema}), we obtain
\begin{equation}\label{b-inter}
b=\frac{1}{\pi^2} \, \frac{w_0}{L} \left( i \pi r - \frac{w_1}{w_0} \right) - c(z) \, \frac{\theta_z L}{L},
\end{equation}
where $L=w_0 (\theta w_1)-w_1 (\theta w_0)$. But, as $q=\exp(y_1/y_0)$, we obtain
\[
\theta_q (\frac{y_2}{y_0})=\frac{q}{\theta_z q} \theta_z (\frac{y_2}{y_0})=\frac{y_0 \, \theta y_2 - y_2 \, \theta y_0}{y_0 \,
\theta y_1 - y_1 \, \theta y_0}=\frac{w_1}{w_0}.
\]
Applying $\theta_q$ to the two extremes of it, we get
\[
\theta_q \left( \frac{w_1}{w_0} \right)=\frac{w_0 (\theta w_1)-w_1 (\theta w_0)}{w_0^2} \, \frac{\theta_q z}{z}=K(q)=1-\theta_q^3 T(q),
\]
which implies
\begin{equation}\label{a1-over-a0-T}
\frac{w_1}{w_0} = \ln q - \theta_q^2 T(q),
\end{equation}
and
\begin{equation}\label{a0b1-a1b0-L}
w_0 \theta w_1 - w_1 \theta w_0=\frac{w_0^2 K(q)z(q)}{\theta_q z(q)}=L(q).
\end{equation}
But
\begin{equation}\label{L-y0}
L=w_0 (\theta w_1) - w_1 (\theta w_0) = \frac{y_0^2}{\sqrt[4]{P(z)}},
\end{equation}
In \cite{Al2} we have the formula
\begin{equation}\label{y0-cua}
 y_0^2 = \left( \frac{\theta_q z(q)}{z(q)} \right)^3 \frac{1}{\sqrt[4]{P(q)} \, K(q)}.
\end{equation}
From (\ref{a0b1-a1b0-L}), (\ref{L-y0}) and (\ref{y0-cua}), we obtain
\begin{equation}\label{w0-cua}
 w_0  =  \left( \frac{\theta_q z(q)}{z(q)} \right)^2 \frac{1}{\sqrt[4]{P(q)} \, K(q)}.
\end{equation}
From the three last identities we arrive at (\ref{L}). From (\ref{b-inter}), (\ref{a1-over-a0-T}) and (\ref{w0-cua}) we deduce (\ref{for-b}). The proof of (\ref{for-a}) is trivial from the first equation of (\ref{sistema}).
\end{proof}

The relevant fact is that the functions $\alpha(q)$, $\tau(q)$, $c(q)$, $b(q)$, $a(q)$, have good arithmetical properties. This is stated in the following conjecture which is crucial to discover Ramanujan-Sato-like series for $1/\pi^2$:

\subsection*{Conjecture}

Let $\alpha_0=\alpha(q_0)$, $\tau_0=\tau(q_0)$, $z_0=z(q_0)$, $a_0=a(q_0)$, etc. If two of the quantities $\alpha_0$, $\tau_0^2$, $z_0$, $a_0$, $b_0$, $c_0$ are algebraic so are all the others. Even more, in that case $\alpha_0$ and $\tau_0^2$ are rational.

\subsection*{Remark}
As one has
\[
\sum_{n=0}^{\infty} \frac{A_{n+x}}{A_x} z^{n+x} = w_0 + w_1 x + w_2 x^2 + w_3 x^3 + w_4 x^4 + O(x^5),
\]
we can write (\ref{expansion}) in the following way:
\begin{multline}\nonumber
\frac{1}{A_x} \sum_{n=0}^{\infty} z^{n+x} A_{n+x}(a+b(n+x)+c(n+x)^2) \\=e^{i \pi r x} \left( \frac{1}{\pi^2}-\alpha x^2 + h \frac{\zeta(3)}{\pi^2}x^3+\frac{\pi^2}{2}(\tau^2-\alpha^2)x^4 \right) + \mathcal{O}(x^5),
\end{multline}
The rational constant $h$ appears (and can be defined) by the coefficient of $x^3$ in the expansion of $A_x$ (analytic continuation of $A_n$) \cite[eq. 4]{AlGu}. In the hypergeometric cases we know how to extend $A_n$ to $A_x$ because the function $\Gamma$ is the analytic continuation of the factorial. To determine $h$ in the non-hypergeometric cases we will not use this definition because it is not clear how to extend $A_n$ to $A_x$ in an analytic way. Instead, we will use the following conjecture

\subsection*{Conjecture}
The radius of convergence $z_c$ of $w_0(z)$ is the smallest root of $P(z)=0$ and
\[ \frac{dz}{dq}(q_c)=0. \]
In addition, $\alpha_c$ is rational. Hence there is a relation with integer coefficients among the numbers $\frac{1}{6} \ln^3|q_c|-T(q_c)$, $\pi^2 \ln|q_c|$ and $\zeta(3)$, which we can discover with the PSLQ algorithm and it determines the invariant $h$. This solution corresponds to the degenerated series $z=z_c$, $c(q_c)=b(q_c)=a(q_c)=0$.

\subsection{New series for $1/\pi^2$}

To discover Ramanujan-like series for $1/\pi^2$ we first obtain the mirror map, the Yukawa coupling and the function $T(q)$. Solving the equation
\[ \frac{dz(q)}{dq}=0 \]
we get the value $q_c$ which corresponds to $z_c$. Let $q=e^t \, e^{i \pi r}$, where $t<0$ is real. If we choose a value of $r$, then we can write (\ref{for-alpha}) in the form
\[ \alpha(t)=\frac{\frac{1}{6}t^3-T(q)-h \zeta (3)}{\pi^2 t}. \]
For $r=0$ we get series of positive terms and for $r=1$ we get alternating series. Solving numerically the equation $\alpha(t)=\alpha_0$, where $\alpha_0$ is rational, we find an approximation of $t_0$ and hence also an approximation of $q_0$. Substituting this $q_0$ in (\ref{for-tau}) we get the value of $\tau_0$. If $\tau_0^2$ is also rational then with the mirror map we get the corresponding approximation of $z_0$. To discover the exact algebraic number $z_0$ we use the Maple function MinimalPolynomial which finds the minimal polynomial of a given degree, then we use the functions $c(q)$, $b(q)$, $a(q)$ to get the numerical values $c_0$, $b_0$ and $a_0$. To recognize the exact algebraic values of these parameters we use MinimalPolynomial again. It is remarkable that in the ``divergent'' cases, we can compute $c_0$, $b_0$, $a_0$ with high precision by using formula (\ref{w0-cua}) for $w_0$.

\subsection*{Big-Table}

In \cite{AlEnStZu} there is a collection of many differential equations of Calabi-Yau type. We select some of those which are pullbacks of differential equations of fifth order. The ones not mentioned below gave no result. The symbol $\#$ stands as a reference of the equation in the Big Table. In \cite{Al1} one can learn the art of finding Calabi-Yau differential equations. In Table \ref{invariants}, we show the invariants corresponding to the cases $\#60$, $\#130$, $\#189$, $\#355$ and, $\#356$.
For all the cases cited above we have found examples of Ramanujan-like series for $1/\pi^2$. In Table \ref{examples} we show those examples, indicating the algebraic values of $\alpha-\alpha_c$, $z_c^{-1} \cdot z$, $a$, $b$ and $c$ for which we have
\[ \sum_{n=0}^{\infty} \widetilde{A}_n \left(z_c^{-1} \cdot z \right)^n (a+bn+cn^2)=\frac{1}{\pi}, \]
where $\widetilde{A_n}=A_n z_c^n$. If $|z_c^{-1} \cdot z | > 1$, then the series diverges but we avoid the divergence considering the analytic continuation given by the parametrization with $q$.

\begin{center}
\begin{table}
\begin{tabular}{|l|}
  \hline \\
  $\quad \textbf{\#60}$ \qquad ${\displaystyle A_n=\sum_{k=0}^{n}\binom{n}{k}^2\binom{2k}{k}\binom{2n-2k}{n-k} \binom{n+k}{n}\binom{2n-k}{n}}$ \\ \\ \hline \\
  $\quad P(z)=(1-16z)^2(1-108z)^2$, \quad
  ${\displaystyle z_c=\frac{1}{2^2\cdot3^3}, \quad \alpha_c=\frac13, \quad \tau^2_c=\frac{2}{23}, \quad h=\frac{50}{23}}$ \\ \\
  \hline \hline \\
  $\quad \textbf{\#130}$ \qquad ${\displaystyle A_n=\sum_{\stackrel{\scriptstyle i+j+k+l}{+m+s=n}} \left( \frac{n!}{i! \, j! \, k! \, l! \, m! \, s!}\right)^2}$ \\ \\ \hline \\
  $\quad P(z)=(1-4z)^2(1-16z)^2(1-36z)^2$, \quad
  ${\displaystyle z_c=\frac{1}{36}, \quad \alpha_c=\frac16, \quad \tau^2_c=\frac{2}{45}, \quad h=\frac{2}{3}}$ \\ \\
  \hline \hline \\
  $\quad \textbf{\#189}$ \qquad ${\displaystyle  A_{n}=\binom{2n}{n}\sum_{j,k}\binom{n}{j}^{2}\binom{n}{k}^{2}\binom{j+k}{n}^2}$ \\ \\ \hline \\
  $\quad P(z)=(1-4z)^2(1-256z)^2$, \quad
  ${\displaystyle z_c=\frac{1}{256}, \quad \alpha_c=\frac12, \quad \tau^2_c=\frac{8}{21}, \quad h=\frac{30}{7}}$ \\ \\
  \hline \hline \\
  $\quad \textbf{\#355}$ \qquad Explicit formula for $A_n$ not known \\ \\ \hline \\
  $\quad P(z)=(1-64z)^2(1-108z)^2$, \quad
  ${\displaystyle z_c=\frac{1}{108}, \quad \alpha_c=\frac13, \quad \tau^2_c=\frac{4}{33}, \quad h=\frac{30}{11}}$ \\ \\
  \hline \hline \\
  $\quad \textbf{\#356}$ \qquad ${\displaystyle A_0=1, \quad A_{n>0}= 2\binom{2n}{n}\sum_{k=0}^{[n/4]}\frac{n-2k}{3n-4k}\binom{n}{k}^2\binom{2k}{k}\binom{2n-2k}{n-k}\binom{3n-4k}{2n}}$ \quad \\ \\ \hline \\
  $\quad P(z)=(1-108z)^2(1-128z)^2$, \quad
  ${\displaystyle z_c=\frac{1}{128}, \quad \alpha_c=\frac13, \quad \tau_c^2=\frac{1}{10}, \quad h=\frac{14}{5}}$ \\ \\
  \hline
\end{tabular}
\vskip 0.5cm
\caption{Table of invariants}\label{invariants}
\end{table}
\end{center}

\begin{center}
\begin{table}
\begin{tabular}{|c|c|c|c|c|c|}
\hline &&&&& \\
$\quad \# \quad$ & $\quad \alpha_0-\alpha_c \quad $ & $\quad z_c^{-1} \cdot z_0 \quad$ & $\quad a_0 \quad$ & $\quad b_0 \quad $ & $\quad c_0 \quad$ \\[1.5ex]
\hline \hline &&&&& \\
$\textbf{60}$ & $\dfrac{4}{23}$ & $\dfrac{1}{2}$ & $\dfrac{3}{3 \cdot 23}$ & $\dfrac{20}{3 \cdot 23}$ & $\dfrac{40}{3 \cdot 23}$ \\[2ex]
$$ & $\dfrac{8}{23}$ & $\dfrac{3^3}{5^3}$ & $\dfrac{40}{5^2 \cdot 23}$ & $\dfrac{282}{5^2 \cdot 23}$ & $\dfrac{616}{5^2 \cdot 23}$ \\[2ex]
$$ & $\dfrac{43}{46}$ & $-\dfrac{1}{48}$ & $\dfrac{706}{2^5 \cdot 3^2 \cdot 23}$ & $\dfrac{5895}{2^5 \cdot 3^2 \cdot 23}$ & $\dfrac{16380}{2^5 \cdot 3^2 \cdot 23}$ \\[2ex]
$$ & $\dfrac{3}{46}$ & $-2$ & $\dfrac{178}{2^5 \cdot 23}$ & $\dfrac{719}{2^5 \cdot 23}$ & $\dfrac{860}{2^5 \cdot 23}$ \\[2ex] \hline &&&&& \\
$\textbf{130}$ & $\dfrac{1}{6}$ & $-\dfrac{3^2}{4^2}$ & $\dfrac{21}{96}$ & $\dfrac{74}{96}$ & $\dfrac{85}{96}$ \\[2ex]
$$ & $0$ & $-4$ & $\dfrac{38}{54}$ & $\dfrac{94}{54}$ & $\dfrac{65}{54}$ \\[2ex] \hline &&&&& \\
$\textbf{189}$ & $\dfrac{2}{21}$ & $\dfrac{8^2}{9^2}$ & $\dfrac{48}{3^5 \cdot 7}$ & $\dfrac{328}{3^5 \cdot 7}$ & $\dfrac{680}{3^5 \cdot 7}$ \\[2ex]
$$ & $\dfrac{4}{7}$ & $\dfrac{1}{3^2}$ & $\dfrac{87}{2^4 \cdot 3^2 \cdot 7}$ & $\dfrac{710}{2^4 \cdot 3^2 \cdot 7}$ & $\dfrac{1840}{2^4 \cdot 3^2 \cdot 7}$ \\[2ex]
$$ & $\dfrac{19}{42}$ & $-\dfrac{2^4}{3^4}$ & $\dfrac{843}{2^2 \cdot 3^5 \cdot 7}$ & $\dfrac{5750}{2^2 \cdot 3^5 \cdot 7}$ & $\dfrac{12610}{2^2 \cdot 3^5 \cdot 7}$ \\[2ex]
$$ & $\dfrac{139}{42}$ & $-\dfrac{2^4}{21^4}$ & $\quad \dfrac{1655799}{2^2 \cdot 3^5 \cdot 7^5} \quad$ & $\quad \dfrac{24749870}{2^2 \cdot 3^5 \cdot 7^5} \quad$ & $\quad \dfrac{122761930}{2^2 \cdot 3^5 \cdot 7^5} \quad$ \\[2ex]
$$ & $\dfrac{1}{14}$ & $-\dfrac{4^2}{3^2}$ & $\dfrac{51}{252}$ & $\dfrac{254}{252}$ & $\dfrac{370}{252}$ \\[2ex] \hline &&&&& \\
$\textbf{355}$ & $\dfrac{1}{11}$ & $\dfrac{3}{4}$ & $\dfrac{1}{2^2 \cdot 3 \cdot 11}$ & $\dfrac{12}{2^2 \cdot 3 \cdot 11}$ & $\dfrac{30}{2^2 \cdot 3 \cdot 11}$ \\[2ex]
$$ & $\dfrac{5}{22}$ & $-\dfrac{3^2}{4^2}$ & $\dfrac{9}{2^2 \cdot 11}$ & $\dfrac{42}{2^2 \cdot 11}$ & $\dfrac{60}{2^2 \cdot 11}$ \\[2ex]
$$ & $\dfrac{5}{11}$ & $\dfrac{27}{196}$ & $\dfrac{21}{2^2 \cdot 7 \cdot 11}$ & $\dfrac{164}{2^2 \cdot 7 \cdot 11}$ & $\dfrac{390}{2^2 \cdot 7 \cdot 11}$ \\[2ex]
$$ & $\dfrac{13}{11}$ & $\dfrac{1}{108}$ & $\dfrac{3119}{2^2 \cdot 3^6 \cdot 11}$ & $\dfrac{29860}{2^2 \cdot 3^6 \cdot 11}$ & $\dfrac{93090}{2^2 \cdot 3^6 \cdot 11}$ \\[2ex]
$$ & $\dfrac{1}{22}$ & $-3$ & $\dfrac{16}{3 \cdot 11}$ & $\dfrac{60}{3 \cdot 11}$ & $\dfrac{60}{3 \cdot 11}$ \\[2ex] \hline &&&&& \\
$\textbf{356}$ & $\dfrac{1}{5}$ & $\dfrac{1}{2}$ & $\dfrac{2}{160}$ & $\dfrac{27}{160}$ & $\dfrac{74}{160}$ \\[2ex]
$$ & $1$ & $\dfrac{1}{50}$ & $\dfrac{74}{800}$ & $\dfrac{679}{800}$ & $\dfrac{2002}{800}$ \\[2ex]
$$ & $\dfrac{7}{10}$ & $-\dfrac{1}{2^4}$ & $\dfrac{158}{1280}$ & $\dfrac{1113}{1280}$ & $\dfrac{2618}{1280}$ \\[2ex] \hline
\end{tabular}
\vskip 0.5cm
\caption{Table of examples}\label{examples}
\end{table}
\end{center}
For $\#355$ an explicit formula for $A_n$ is not known but we can easily compute these numbers from the fifth order differential equation $Dw=0$, where $D$ is the following operator:
\begin{multline}\nonumber
\theta^5-2z(2\theta+1)(43\theta^4+86\theta^3+77\theta^2+34\theta+6) \\ + 48z^2(\theta+1)(2\theta+1)(2\theta+3)(6\theta+5)(6\theta +7).
\end{multline}

\subsection*{Complex series for $1/\pi^2$}

Another method to obtain series for $1/\pi^2$ is by applying suitable transformations to the already known series for $1/\pi^2$; see \cite{Zu1}, \cite{Al5} and \cite{AlGu}. Although we can use this technique to obtain other real Ramanujan-like series for $1/\pi^2$ our interest here is to find examples of Ramanujan-like complex series for $1/\pi^2$. For that purpose we will use the following very general transformation:
\[
\sum_{n=0}^{\infty} A_n z^n = \frac{1}{1-z} \sum_{n=0}^{\infty} a_n \left[ u \left( \frac{z}{1-z} \right)^m  \right]^n, \quad
A_n=\sum_{k=0}^{n} u^k \binom{n}{mk} a_k,
\]
where $u=1$ or $u=-1$ and $m$ is a positive integer (check that both sides satisfy the same Calabi-Yau differential equation). For example, translating the hypergeometric series
\begin{equation}\label{series-252-pi2}
\sum_{n=0}^{\infty} \frac{(3n)!(4n)!}{n!^7} (252n^2+63n+5) (-1)^n \left( \frac{1}{24} \right)^{4n}=\frac{48}{\pi^2},
\end{equation}
taking $u=-1$ and $m=4$, we find four series, one of them is the complex series
\begin{equation}\label{ex-complex-pi2}
\sum_{n=0}^{\infty} A_n \left( \! \frac{}{} 9072n^2 +(9072-756i)n + (2875-516i) \right) \left( \frac{1}{1-24i} \right)^n=\frac{27504+3454i}{\pi^2},
\end{equation}
where
\[ A_n=\sum_{k=0}^n (-1)^k \binom{n}{4k} \frac{(3k)! (4k)!}{k!^7}. \]
Transformations preserve the value of the invariants $h$, $\alpha_c$ and $\tau_c$ and the series (\ref{ex-complex-pi2}) has $2(\alpha-\alpha_c)=3$ and $\tau=3 \sqrt{3}$ because it is a transformation of (\ref{series-252-pi2}); see \cite{AlGu}.
Looking at the transformation with $u=-1$ and $m=4$, we see that the mirror maps $z$ and $z'$ corresponding to $A_n$ and $a_n$, are related in the following way:
\[ z=\frac{\sqrt[4]{z'}}{1+\sqrt[4]{z'}}. \]
Write $z=z(q)$ and $z'=z'(q')$. Then, the first terms of $J(q)$ are
\[ J(q) = \frac{1}{q}+1+582q^3+277263q^7+167004122q^{11}+\cdots, \]
with $q=\sqrt[4]{q'}$. Writing, as usual, $q=e^{i \pi r} |q|$, we deduce that as the series (\ref{series-252-pi2}) has $r=1$ then the series (\ref{ex-complex-pi2}) has $r=1/4$.

\section{ADDENDUM}

The method used in this paper, to find $h$, $\alpha_c=\alpha(q_c)$ and $\tau_c=\tau(q_c)$, is valid for those Calabi-Yau differential equations such that $K(q_c)=0$, where $q_c$ is a solution of $dz/dq=0$. In these cases, we conjecture that $z(q_c)$ is the smallest root of $P(z)$ and that $a(q_c)=b(q_c)=c(q_c)=0$. But from Th. \ref{teor-abc}, we see that $b(q_c)=0$ implies that $\tau_c=f(q_c)$, where
\[ f(q)=\frac{1}{\pi^2} \left( \frac{}{} \! \! \theta_q^2 T(q)-\ln|q| \right) \frac{L(q)}{\theta_q L(q)}, \]
which allows us to obtain the critical value of $\tau$. Then, replacing $q=q_c$
in (\ref{for-tau}), we can obtain $\alpha_c$. Finally, replacing $q=q_c$ in (\ref{for-alpha}), we obtain the value of $h$. As $q_c>0$, the formula for $h$ can be written in the form $h=h(q_c)$, where
\[ h(q)=\frac{1}{\zeta(3)} \left( \Phi(q)-\ln (q) \, \theta_q \Phi(q) - \ln (q) \, \frac{L(q)}{\theta_q L(q)} \theta^2_q \Phi(q) \right), \]
and $\Phi(q)$ is the Gromov-Witten potential (\ref{GW}). The advantage of this way of getting the invariants $\tau_c$, $\alpha_c$ and $h$ is that we use explicit formulas instead of the PSLQ algorithm.

\end{document}